
\documentclass[12pt]{amsart}
\usepackage[all]{xy}
\usepackage{amssymb}
\usepackage{amsthm}
\usepackage{amsmath}
\usepackage{amscd,enumitem}
\usepackage{verbatim}
\usepackage{eurosym}
\usepackage{graphicx}
\usepackage{dsfont}
\usepackage{stmaryrd}
\usepackage{color}
\usepackage{float}
\usepackage{longtable}
\usepackage{dcolumn}
\usepackage[mathscr]{eucal}
\usepackage[all]{xy}
\usepackage{hyperref}
\usepackage[title]{appendix}
\usepackage[usenames,dvipsnames]{xcolor}
\usepackage{bbm}
\usepackage[textheight=8.85in, textwidth=6.8in]{geometry}
\usepackage{multirow}
\usepackage{caption}
\usepackage{MnSymbol}
\theoremstyle{plain}
\newtheorem{theorem}{Theorem}[section]

\newtheorem{lemma}[theorem]{Lemma}

\theoremstyle{definition}

\theoremstyle{remark}
\newtheorem*{remark}{Remark}

\newcommand{\F}{\mathbb{F}}

\newcommand{\C}{\mathbb{C}}


\catcode`,\active

\catcode`\,12

\newcommand{\Fq}{\mathbb{F}_q}

\newcommand{\Fqh}{\widehat{\mathbb{F}}_q^\times}    


\newcommand{\n}{\nu}
\newcommand{\on}{\overline{\nu}}

\numberwithin{equation}{section}

\begin{document}
	\title[On Matrices Arising in Finite Field Hypergeometric Functions]{On Matrices Arising in Finite Field Hypergeometric Functions}

	\author{Satoshi Kumabe\and Hasan Saad}
		\address[Satoshi Kumabe]{Joint Graduate School of Mathematics for Innovation, Kyushu University, 744, Motooka, Nishi-ku, Fukuoka, 819-0395, Japan}
		
	\email{kuma511ssk@gmail.com}

	\address[Hasan Saad]{Department of Mathematics, University of Virginia, Charlottesville, VA 22904, USA}
	
	\email{hs7gy@virginia.edu}

	\begin{abstract}
	In \cite{Lehmer56, Lehmer60}, Lehmer constructs four classes of matrices constructed from roots of unity for which the characteristic polynomials and the $k$-th powers can be determined explicitly. Here we study a class of matrices which arise naturally in transformation formulas of finite field hypergeometric functions and whose entries are roots of unity and zeroes. We determine the characteristic polynomial, eigenvalues, eigenvectors, and $k$-th powers of these matrices. In particular, the eigenvalues are natural systems of products of Jacobi sums.
	\end{abstract}
	\maketitle
	
	\section{Introduction}

	In \cite{Lehmer56}, Lehmer remarks that the class of matrices for which one can explicitly determine the eigenvalues and the general $k$-th power is very limited. Using the Legendre character on finite fields, Lehmer constructs two classes of matrices for which this is possible. More generally, using characters of arbitrary orders, Carlitz \cite{Carlitz} and Lehmer \cite{Lehmer60} construct other classes of matrices for which they determine the characteristic polynomials and $k$-th powers.
	
	Here we consider a class of matrices, whose entries are roots of unity and zeroes, which arise in the transformation formulas for Gaussian hypergeometric functions over finite fields defined by Greene\cite{Greene}. We first recall the definition of these functions. If $p$ is a prime, $q=p^r, n\geq 1,$ and $A_1,\ldots,A_n,B_2,\ldots,B_{n}$ are complex-valued multiplicative characters over $\F_q^\times,$ then the finite field hypergeometric functions are defined by 
	\begin{align}\label{Greene_Def}
	{_{n}F_{n-1}}\left(\begin{array}{cccc}
		A_1, & A_2,& \ldots, & A_n\\
		~& B_2, &\ldots, &B_{n}
	\end{array}\mid x\right)_q:=\frac{q}{q-1}\sum_{\chi}{A_1\chi\choose\chi}{A_2\chi\choose B_2\chi}\cdots{A_n\chi\choose B_{n}\chi}\chi(x),
	\end{align}
	where the summation is over multiplicative characters $\chi$ of $\F_q^\times,$ and where the binomial coefficient $\binom{A}{B}$ is a normalized Jacobi sum, given by
	\begin{align}\label{binomial}
	{A\choose B}:=\frac{B(-1)}{q}J(A,\overline{B}):=\frac{B(-1)}{q}\sum\limits_{x\in\F_q}A(x)\overline{B}(1-x).
	\end{align}
	These functions have deep connections to \'etale cohomology \cite{Katz_exponential_sum} and often arise in geometry where they count the number of $\F_q$-points on various algebraic varieties (see Theorem 1.5 of \cite{beukers_cohen_mellit}). For example, if $\lambda\in\F_q\setminus\{0,1\}$ and $E_\lambda$ is the Legendre normal form elliptic curve
	$$
	E_\lambda:\ \ \ y^2 = x(x-1)(x-\lambda),
	$$
	then (see \cite[Section 4]{Koike} and \cite[Theorem 1]{Ono}) we have that
	$$
	\# E_\lambda(\F_q) = 1+q+q\cdot\phi_q(-1)\cdot {_{2}F_{1}}\left(\begin{array}{ccc}
		\phi_q, & \phi_q\\
		~& \varepsilon
	\end{array}\mid \lambda\right)_q,
	$$
	where $\phi_q$ and $\varepsilon$ are respectively the Legendre symbol and the trivial character on $\F_q^\times.$

	Moreover, these functions satisfy analogues of several transformation formulas of their classical counterparts, such as the generalized Euler integral transform (see \cite[(4.1.1)]{Slater}). More precisely, we have (see \cite[Theorem 3.13]{Greene}) that 
	\begin{align}\label{TransFormula}
		&{_{n+1}F_{n}}\left(\begin{array}{ccccc}
			A_1, & A_2, &\ldots, & A_n, & A_{n+1}\\
			~&{B_2}, &\ldots, &{B_{n}} & {B_{n+1}}
		\end{array}\mid x\right)_{{q}} \\ 
		& = {\frac{A_{n+1}B_{n+1}(-1)}{q}}\sum\limits_{y\in\F_q}{_{n}F_{n-1}}\left(\begin{array}{cccc}
			A_1, & A_2, &\ldots, & A_n\\
			~& {B_2}, &\ldots, & {B_{n}}
		\end{array}\mid xy\right)_{{q}}\cdot{ A_{n+1}(y)\overline{A_{n+1}}B_{n+1}(1-y)}.\notag
	\end{align}

	Motivated by the transformation formula (\ref{TransFormula}), Ono and Griffin--Rolen study the matrix corresponding to this transformation when $q=p^r$ is odd, $A_{n+1}=\phi_q,$ and $B_{n+1}=\epsilon$. More precisely, consider the $(q-2)\times(q-2)$ matrix $M=(M_{ij})$ indexed by $i,j\in\F_q\setminus\{0,1\},$ where
	$$
	M_{ij}=\phi_q(1-ij)\phi_q(ij)
	$$
	and let $f_q$ be its characteristic polynomial.
	In this notation, Griffin and Rolen prove \cite{Griffin_Rolen}  a conjecture by Ono that
	\begin{equation}\label{GF-Equation}
		f_q(x)=\begin{cases}
			(x+1)(x-1)(x+2)(x^2-q)^{(q-5)/2} & \text{ if } \phi_q(-1)=1,\\
			x(x^2-3)(x^2-q)^{(q-5)/2} & \text{ if }\phi_q(-1)=-1.
		\end{cases}
	\end{equation}
	The purpose of this paper is to study, à la Lehmer, a more general analogue of the matrix $M$ that arises when the characters $A_{n+1}$ and $B_{n+1}$ are arbitrary. More precisely, we consider the $({q}-1)\times({q}-1)$ matrix $M_q=(M_q)_{ij}$ indexed by $i,j\in\F_q^\times,$ where
	$$
	(M_q)_{ij}:= A(ij)\overline{A}B(1-ij).
	$$
	We first determine the characteristic polynomial $f_q$ of $M_q.$ 
	
	\begin{theorem}\label{Thm_char_poly}
		If $p$ is an odd prime, $q=p^r,$ and $\omega$ is a character of order $q-1$ of $\F_q^\times$, then
		\begin{align*}
			f_q(x) = (x-J(\overline{A}B, A))(x - J(\overline{A}B,\overline{A}\phi)) \prod_{l = 1}^{\frac{q-3}{2}}(x^2 - J(\overline{A}B, A\omega^l)J(\overline{A}B, A\overline{\omega^l})).
		\end{align*}
	\end{theorem}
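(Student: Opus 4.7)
The plan is to block-diagonalize $M_q$ in the basis of character vectors, which is the natural Fourier basis adapted to the multiplicative structure of the entries. For each multiplicative character $\chi$ of $\F_q^\times$, define the column vector $v_\chi \in \C^{q-1}$ by $(v_\chi)_j := \chi(j)$ for $j \in \F_q^\times$. Schur orthogonality guarantees that the family $\{v_\chi\}_\chi$ forms a basis of $\C^{q-1}$.

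The core computation is the evaluation of $M_q v_\chi$. Using $A(ij) = A(i)A(j)$ together with the change of variable $u = ij$, a direct expansion gives
\[
(M_q v_\chi)_i = \overline{\chi}(i) \sum_{u \in \F_q^\times} (A\chi)(u)\, (\overline{A}B)(1-u) = J(A\chi, \overline{A}B) \cdot \overline{\chi}(i),
\]
so $M_q v_\chi = J(A\chi, \overline{A}B) \cdot v_{\overline{\chi}}$. Hence each plane $V_\chi := \mathrm{span}(v_\chi, v_{\overline{\chi}})$ is $M_q$-invariant, and $f_q$ factors as the product of the characteristic polynomials of the restrictions $M_q|_{V_\chi}$.

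Next I would sort the characters into the two self-conjugate ones $\epsilon = \omega^0$ and $\phi_q = \omega^{(q-1)/2}$, together with the $(q-3)/2$ non-self-conjugate conjugate pairs $\{\omega^l, \overline{\omega^l}\}$ for $l = 1, \ldots, (q-3)/2$. For the self-conjugate characters, $V_\chi$ is one-dimensional and $v_\chi$ is itself an eigenvector with eigenvalue $J(A\chi, \overline{A}B)$; applying the symmetry $J(\alpha, \beta) = J(\beta, \alpha)$ (immediate from $x \mapsto 1-x$) gives the two linear factors of the theorem. For each conjugate pair, $M_q|_{V_\chi}$ has the matrix
\[
\begin{pmatrix} 0 & J(A\overline{\chi}, \overline{A}B) \\ J(A\chi, \overline{A}B) & 0 \end{pmatrix}
\]
in the basis $(v_\chi, v_{\overline{\chi}})$, whose characteristic polynomial is $x^2 - J(A\omega^l, \overline{A}B) J(A\overline{\omega^l}, \overline{A}B)$; multiplying over $l = 1, \ldots, (q-3)/2$ produces the quadratic product in the theorem.

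There is no single serious obstacle: the argument reduces to the character-sum computation above plus careful bookkeeping of the character pairing (two self-conjugate characters plus $(q-3)/2$ conjugate pairs account for $2 + (q-3) = q-1$ dimensions, as required). The only delicate points are the boundary cases in which $A\chi = \epsilon$ or $\overline{A}B = \epsilon$, which are handled transparently by the standard evaluations $J(\epsilon, \beta) = -1$ for $\beta \neq \epsilon$ and $J(\epsilon, \epsilon) = q-2$, and these do not disturb the stated formula.
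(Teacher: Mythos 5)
Your proposal is correct and follows essentially the same route as the paper: your computation $M_q v_\chi = J(A\chi,\overline{A}B)\,v_{\overline{\chi}}$ is exactly Lemma~\ref{Lem_Mq_conjugate_vectors}, and the split into the two self-conjugate characters $\varepsilon,\phi$ plus the $(q-3)/2$ conjugate pairs is the paper's decomposition, with your explicit $2\times 2$ restriction matrix playing the role of Lemma~\ref{LemmaV1V2}. The only cosmetic remark is that your (correct) eigenvalue for $\chi=\phi$ is $J(\overline{A}B,A\phi)$, which is what Lemma~\ref{Lem_Mq_conjugate_vectors} gives at $l=(q-1)/2$; the form $J(\overline{A}B,\overline{A}\phi)$ appearing in the theorem statement agrees with this only up to what appears to be a typographical slip in the paper, so no fault lies with your argument.
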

	
		\begin{remark}
			Our proof explicitly determines the eigenvectors of $M_q.$
		\end{remark}
	
	Furthermore, when $B=\varepsilon$ and $k\geq 1,$ we explicitly determine the entries of $M_q^k.$ 
	
	\begin{theorem}\label{Thm_k-th_power_Mq}
		If $k\geq 1,$ we write $k=2l$ if $k$ is even and $k=2l+1$ if $k$ is odd. In this notation, if $p$ is an odd prime, $q=p^r,$ and $B=\varepsilon,$ then we have
			$$
			(M_q^k)_{ij}=A^l(-1)\cdot q^{k-1}\cdot{_kF_{k-1}}\left(\begin{array}{cccc}
				A_1, & A_2,& \ldots, & A_k\\
				~& B_2, &\ldots, &B_k
			\end{array}\mid \frac{j^{(-1)^k}}{i}\right)_q,
			$$
			where 
			$$
			A_n	= \begin{cases}
				A & \ \ \ \text{ if } 1\leq n \leq l \\
				\varepsilon & \ \ \ \text{otherwise}
			\end{cases}\ \ \ \ \ and \ \ \ \ \ 
			B_n	= \begin{cases}
				\varepsilon & \ \ \ \text{ if } 2\leq n \leq l \\
				\overline{A} & \ \ \ \text{otherwise}.
			\end{cases}
			$$
	\end{theorem}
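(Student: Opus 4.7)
The plan is to proceed by induction on $k$, with the transformation formula (\ref{TransFormula}) providing the inductive step. For the base case, one verifies the formula directly for small $k$, using the auxiliary identity ${}_1F_0(A|x)_q = \overline{A}(1-x)$ (valid for $x \neq 0,1$), which follows from character orthogonality applied to the defining sum $\frac{q}{q-1}\sum_\chi\binom{A\chi}{\chi}\chi(x)$ after unfolding each binomial coefficient as a Jacobi sum.

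For the inductive step, I would write
\begin{align*}
(M_q^{k+1})_{ij} = \sum_{m \in \F_q^\times} (M_q^k)_{im}(M_q)_{mj},
\end{align*}
substitute the inductive formula for $(M_q^k)_{im}$ together with $(M_q)_{mj} = A(mj)\overline{A}(1-mj)$ (which uses $B=\varepsilon$), and perform a change of variable such as $y = mj$ (composed with $y \mapsto 1/y$ when the parity of $k$ requires it) that brings the argument of the inner ${}_kF_{k-1}$ into the form $xy$ with $x = j^{(-1)^{k+1}}/i$, the target argument at step $k+1$. The resulting sum then has precisely the shape of the right-hand side of (\ref{TransFormula}), and the transformation formula upgrades the ${}_kF_{k-1}$ to a ${}_{k+1}F_k$, the prefactor $\frac{A_{k+1}B_{k+1}(-1)}{q} = \frac{A(-1)}{q}$ combining with the existing $q^{k-1}$ to produce $q^k$.

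The main obstacle is that the pair $(A_{k+1}, B_{k+1})$ naturally appended by the matrix product, namely $(A,\varepsilon)$, does not always coincide with the pair prescribed by the parity-dependent pattern in the statement (for example, going from $k = 2l$ to $k+1 = 2l+1$ the statement requires appending $(\varepsilon,\overline{A})$, while going from $k = 2l+1$ to $k+1 = 2l+2$ it requires $(A,\varepsilon)$). To interchange these two types of pair inside Greene's defining character sum, I would invoke the Jacobi-sum identity $\binom{\overline{\chi}}{\overline{A}\overline{\chi}} = A(-1)\binom{A\chi}{\chi}$, which follows directly from the symmetry $J(\chi_1,\chi_2) = J(\chi_2,\chi_1)$ together with $A(-1)\in\{\pm 1\}$; after reindexing $\chi \to \overline{\chi}$ in the sum, this identity rewrites one binomial coefficient as the other at the cost of a factor $A(-1)$. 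Accumulating these $A(-1)$ factors across the required swaps is precisely what produces the scalar $A^l(-1)$ in the statement; verifying the bookkeeping separately in the even and odd cases (and making sure the swap at each parity is consistent with the argument $j^{(-1)^k}/i$) is the most delicate part of the argument.
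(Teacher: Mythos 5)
Your strategy is genuinely different from the paper's. The paper never invokes the transformation formula (\ref{TransFormula}): it applies Lemma~\ref{Lem_Mq_conjugate_vectors} twice to get $M_q^2=PDP^{-1}$ with $D$ diagonal in the character basis $\{\boldsymbol{w}^l\}$, computes $(M_q^{2l})_{ij}$ as an explicit character sum via Lemma~\ref{PInverse}, and then matches that sum against Greene's definition (\ref{Greene_Def}) using (\ref{binomial_a})--(\ref{binomial_c}). Your induction is in principle at least as natural, since the matrix product $\sum_m (M_q^k)_{im}(M_q)_{mj}$ is literally a finite-field Euler transform; the substitutions $y=mj$ (even $k$) and $y=(mj)^{-1}$ (odd $k$) do put the sum in the shape of the right side of (\ref{TransFormula}), with the appended pair being $(A,\varepsilon)$ in the first case and $(\varepsilon,\overline{A})$ in the second.

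However, the step you defer as ``delicate bookkeeping'' contains a genuine gap, and it does not close. The identity $\binom{\overline{\chi}}{\overline{A}\,\overline{\chi}}=A(-1)\binom{A\chi}{\chi}$ is correct, but it cannot be used to rewrite a \emph{single} binomial factor inside Greene's sum: the reindexing $\chi\mapsto\overline{\chi}$ is a global involution that simultaneously conjugates \emph{every} factor $\binom{A_n\chi}{B_n\chi}$ and replaces $\chi(x)$ by $\chi(x^{-1})$. Carrying your even-to-odd step through honestly, the matrix product yields $A^{l+1}(-1)\,q^{2l}$ times the ${}_{2l+1}F_{2l}$ with $l+1$ upper parameters equal to $A$, evaluated at $1/(ij)$; applying the global involution converts this to $A^{l}(-1)\,q^{2l}$ times the hypergeometric function with the parameters in the statement, but evaluated at $ij$, not at $j^{-1}/i$. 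These are genuinely different: for $q=5$, $A=\omega$ of order $4$ with $\omega(2)=\sqrt{-1}$, $k=3$ and $(i,j)=(1,3)$, a direct $4\times 4$ computation gives $(M_q^3)_{13}=-4+\sqrt{-1}$, whereas the displayed formula (argument $j^{-1}/i=2$) evaluates to $-1$. Relatedly, your base case $k=1$ fails as stated: with $l=0$ the right side is ${}_1F_0(\varepsilon\mid (ij)^{-1})_q=\varepsilon(1-(ij)^{-1})\in\{0,1\}$, while the left side is $A(ij)\overline{A}(1-ij)$. So your induction, done correctly, proves the (correct) variant of the odd-$k$ formula with argument $ij$ rather than the one displayed --- a discrepancy the paper's own proof does not detect because it is written out only for even $k$. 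To repair the write-up, start the induction at $k=2$ (verified directly from Lemma~\ref{Lem_Mq_conjugate_vectors}) and record the parameters, prefactor, and argument exactly as they emerge from (\ref{TransFormula}) at each step, rather than forcing them into the displayed pattern by a one-factor swap.
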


		\begin{remark}
			If $B\neq\varepsilon,$ the entries of $M_q^k$ can be written in terms of more general finite field hypergeometric functions, such as the ones by McCarthy {{\cite[Definition 2.4]{McCarthy_gen_def}}} and Otsubo {{\cite[Definition 2.7]{Otsubo}}}. The proof is analogous to the proof of Theorem~\ref{Thm_k-th_power_Mq}.
		\end{remark}
	
	The paper is organized as follows. In Section~\ref{NutsBolts-Section}, we recall facts concerning characters, finite field hypergeometric functions, and determine the action of $M_q$ on an appropriate basis. In Section~\ref{Proofs-Section}, we prove Theorems~\ref{Thm_char_poly} and \ref{Thm_k-th_power_Mq}.
	
			\section*{Acknowledgements}
	The authors would like to thank Ken Ono for introducing the paper due to Griffin--Rolen to them and for many valuable comments. This paper was written while the first author was visiting the University of Virginia. He would like to deeply thank Ken Ono for his hospitality and support during his visit. 
	The first author was supported by JSPS KAKENHI Grant Number JP22KJ2477 and WISE program (MEXT) at Kyushu University. The second author furthermore thanks Ken Ono for providing research support with the Thomas Jefferson Fund and the NSF Grant (DMS-2002265 and DMS-2055118).
	
	\section{Nuts and Bolts}\label{NutsBolts-Section}
	
	Here we recall facts about characters on finite fields and hypergeometric functions. We also determine the behavior of $M_q$ on an appropriate set of vectors. 
	
	First, we denote by $\Fqh$ the group of characters on $\F_q^\times.$ It is well known (see \cite[Proposition 8.1.2]{Ireland_Rosen}) that if $\chi\in\Fqh,$ then
	\begin{equation}\label{SumValues}
		\sum\limits_{x\in\F_q}\chi(x) = \begin{cases}
			q-1 & \text{\it if }\chi=\varepsilon \\
			0 & \text{\it otherwise }
		\end{cases}
	\end{equation}
	and that if $x\in\F_q,$ then 
	\begin{equation}\label{SumCharacters}
		\sum\limits_{\chi\in\Fqh}\chi(x) = \begin{cases}
			q-1 & \text{\it if }x=1 \\
			0 & \text{\it otherwise.}
		\end{cases}
	\end{equation}
	Furthermore, if $A,B\in\Fqh,$ then the following properties of binomial coefficients are known (\cite[(2.6) through (2.8)]{Greene}]).
	
	\begin{align}\label{binomial_a}
		\binom{A}{B} = \binom{A}{A\overline{B}},
	\end{align}
	
	\begin{align}\label{binomial_b}
		\binom{A}{B} = B(-1)\cdot \binom{B\overline{A}}{B},
	\end{align}
	
	\begin{align}\label{binomial_c}
		\binom{A}{B} = \overline{AB}(-1)\cdot\binom{\overline{B}}{\overline{A}}.
	\end{align}
	
	To state our results, fix a generator $\omega$ of $\Fqh.$ If $1\leq l\leq q-1,$  then we define the vectors $\boldsymbol{w}^l$ indexed by $i\in\F_q^\times,$ where
	$$
	\boldsymbol{w}^{l}_i=\omega^l(i).
	$$
 	The following lemma determines $M_q\boldsymbol{w}^l.$
	\begin{lemma}\label{Lem_Mq_conjugate_vectors}
		If $1\leq l\leq q-1,$ then we have
		\begin{align*}
			M_q \boldsymbol{w}^l=J(\overline{A}B, A\omega^l){\boldsymbol{w}}^{q-1-l}. 
		\end{align*}
	\end{lemma}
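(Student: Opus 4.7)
The plan is a direct computation of the $i$-th entry of $M_q \boldsymbol{w}^l$, followed by a change of variables that factors out the dependence on $i$ and leaves a Jacobi sum.

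First I would unfold the definitions to write
\[
(M_q \boldsymbol{w}^l)_i \;=\; \sum_{j\in\F_q^\times} A(ij)\,\overline{A}B(1-ij)\,\omega^l(j).
\]
Since $i \in \F_q^\times$, the substitution $u = ij$ is a bijection of $\F_q^\times$ with itself, and $j = u/i$. Using multiplicativity of $\omega^l$, this rewrites the inner variable as $\omega^l(j) = \omega^l(u)\,\omega^l(i^{-1}) = \omega^l(u)\,\omega^{q-1-l}(i)$ (because $\omega$ has order $q-1$). Pulling the $i$-dependent factor outside the sum gives
\[
(M_q \boldsymbol{w}^l)_i \;=\; \omega^{q-1-l}(i)\sum_{u\in\F_q^\times} A\omega^l(u)\,\overline{A}B(1-u).
\]

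The next step is to recognize the remaining sum as the Jacobi sum $J(A\omega^l,\overline{A}B)$, which equals $J(\overline{A}B,A\omega^l)$ by symmetry of the Jacobi sum. Here I would note that extending the sum from $\F_q^\times$ to $\F_q$ does not change its value: the missing term at $u=0$ contributes $A\omega^l(0)$, which is $0$ under the standard convention that nontrivial characters (and the trivial character) vanish at $0$; similarly the value at $u=1$ is already included because the factor $\overline{A}B(1-u)$ handles the boundary. This identification is the only step that requires any care about conventions, and it is routine once the character at $0$ convention used in \eqref{binomial} is kept in mind.

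Combining the two factors gives
\[
(M_q \boldsymbol{w}^l)_i \;=\; J(\overline{A}B,A\omega^l)\,\omega^{q-1-l}(i) \;=\; J(\overline{A}B,A\omega^l)\,\boldsymbol{w}^{q-1-l}_i,
\]
which is the claimed identity. I do not expect any genuine obstacle: the entire argument is a one-line change of variable together with the observation that $\overline{\omega^l} = \omega^{q-1-l}$. The only thing to be careful about is consistency with the character-at-zero convention when identifying the sum with a Jacobi sum, so I would state that convention explicitly at the start of the proof.
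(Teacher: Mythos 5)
Your proposal is correct and follows essentially the same route as the paper's proof: a direct computation of the $i$-th entry, the substitution $j\mapsto j/i$, factoring out $\overline{\omega^l}(i)=\omega^{q-1-l}(i)$, and identifying the remaining sum as $J(\overline{A}B,A\omega^l)$. Your explicit attention to the character-at-zero convention when passing from a sum over $\F_q^\times$ to the Jacobi sum over $\F_q$ is a detail the paper leaves implicit, but it does not change the argument.
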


	\begin{proof}
		Fix $l.$ Then, for $i\in\F_q^\times,$ we have that
		$$
			(M_q\boldsymbol{w}^l)_i=\sum\limits_{j\in\F_q^\times} A(ij)\overline{A}B(1-ij)\omega^l(j).
		$$
		Replacing $j$ by {$j/i$}, we have that
		\begin{align*}
			(M_q\boldsymbol{w}^l)_i&=\sum\limits_{j\in\F_q^\times} A(j)\overline{A}B(1-j)\omega^l\left(\frac{j}{i}\right) \\
			&=\overline{\omega^l}(i)\sum\limits_{j\in\F_q^\times} (A\omega^l)(j)\overline{A}B(1-j) \\
			&=J(\overline{A}B, A\omega^l){\boldsymbol{w}^{q-1-l}_i}.
		\end{align*}
	\end{proof}

	\begin{remark}
		Recall that the Fourier transform of $f:\F_q\to\C$ is a function $\widehat{f}:\Fqh\to\C$ defined by 
		\begin{align}\label{Fourier_transform_of_f}
			\widehat{f}(\n)=\sum_{\lambda\in \Fq}f(\lambda)\on(\lambda).
		\end{align}
		By a similar argument to the proof of Lemma \ref{Lem_Mq_conjugate_vectors}, we have that the Fourier transforms of the components of $M_q^2$ are products of two Jacobi sums.
\end{remark}

	To determine the quadratic terms in Theorem~\ref{Thm_char_poly}, we make use of the following lemma which follows from a direct computation.
	
	\begin{lemma}\label{LemmaV1V2}
		If $M$ is an $n\times n$ matrix,  $\lambda_1,\lambda_2\in\C,$ and $v_1\neq \pm v_2\in\C^n$ such that
		$$
		Mv_1 = \lambda_1 v_2, Mv_2=\lambda_2v_1,
		$$
		then the vectors $v_1\pm\sqrt{\lambda_1/\lambda_2}v_2$ are eigenvectors of $M$ corresponding to the eigenvalues $\pm\sqrt{\lambda_1\lambda_2}.$
	\end{lemma}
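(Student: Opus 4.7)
The plan is to verify the lemma by a direct linearity computation. I would set $w_\pm := v_1 \pm \sqrt{\lambda_1/\lambda_2}\, v_2$ and apply $M$ to $w_\pm$, using the two hypotheses $Mv_1 = \lambda_1 v_2$ and $Mv_2 = \lambda_2 v_1$. This immediately yields
\[
Mw_\pm = \lambda_1 v_2 \pm \sqrt{\lambda_1/\lambda_2}\,\lambda_2 v_1 = \lambda_1 v_2 \pm \sqrt{\lambda_1\lambda_2}\, v_1.
\]
To finish, I would factor $\pm\sqrt{\lambda_1\lambda_2}$ out of the right-hand side and observe that $\lambda_1/\sqrt{\lambda_1\lambda_2} = \sqrt{\lambda_1/\lambda_2}$, so
\[
Mw_\pm = \pm\sqrt{\lambda_1\lambda_2}\Bigl(v_1 \pm \sqrt{\lambda_1/\lambda_2}\, v_2\Bigr) = \pm\sqrt{\lambda_1\lambda_2}\, w_\pm,
\]
which is exactly the eigenvalue equation claimed.

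The only subtlety is to make sure that $w_+$ and $w_-$ are nonzero (so that they are genuine eigenvectors rather than the trivial vector). The implicit hypothesis is that $\lambda_2 \neq 0$ so that $\sqrt{\lambda_1/\lambda_2}$ makes sense; under this assumption $v_1, v_2$ must both be nonzero since $Mv_1 = \lambda_1 v_2$ and $Mv_2 = \lambda_2 v_1$ force one to vanish iff the other does, and the nontriviality assumption $v_1 \neq \pm v_2$ rules this out. If $w_+ = 0$ or $w_- = 0$, then $v_1$ would be a scalar multiple of $v_2$; combining this with $Mv_1 = \lambda_1 v_2$ and $Mv_2 = \lambda_2 v_1$ pins down the scalar to be $\pm\sqrt{\lambda_1/\lambda_2}$, and in the application of this lemma later in the paper (where $v_1, v_2$ are the vectors $\boldsymbol{w}^l, \boldsymbol{w}^{q-1-l}$ from Lemma \ref{Lem_Mq_conjugate_vectors}) this is excluded by the linear independence of distinct characters. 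I do not expect any real obstacle here; the lemma is essentially a one-line verification, and its content is just the standard trick of diagonalizing a $2 \times 2$ block with zero diagonal and off-diagonal entries $\lambda_1, \lambda_2$.
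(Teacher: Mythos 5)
Your proposal is correct and matches the paper's treatment: the paper simply asserts that the lemma ``follows from a direct computation,'' and your computation $Mw_\pm = \lambda_1 v_2 \pm \sqrt{\lambda_1\lambda_2}\,v_1 = \pm\sqrt{\lambda_1\lambda_2}\,w_\pm$ is exactly that verification. Your additional remarks on the implicit hypothesis $\lambda_2 \neq 0$ and on the nonvanishing of $w_\pm$ (guaranteed in the application by linear independence of distinct characters) are sensible points of care that the paper leaves unstated.
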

 
	Finally, we need to determine the inverse change-of-basis matrix for the basis $\{\boldsymbol{w}^l\}_{1\leq l\leq q-1}.$
	\begin{lemma}\label{PInverse}
	If $P$ is the matrix given by $P_{ij}=\omega^j(i),$ where $i\in\F_q^\times$ and $1\leq j\leq q-1,$  then we have\footnote{Note that the indices for rows and columns are inverted in $P^{-1}.$ In other words, for $P^{-1},$ $1\leq i\leq q-1$ and $j\in\F_q^\times.$}
	$$
	(P^{-1})_{ij}=\frac{1}{q-1}\overline{\omega^i(j)}.
	$$
	\end{lemma}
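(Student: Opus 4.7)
The plan is to verify the formula directly by computing the product $P P^{-1}$ (or equivalently $P^{-1}P$) and showing it equals the identity, using the character orthogonality relation \eqref{SumCharacters}.

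First, I would fix $i, j \in \F_q^\times$ and compute the $(i,j)$-entry of $PP^{-1}$ by summing over the intermediate index $k$ with $1 \leq k \leq q-1$:
\begin{align*}
(PP^{-1})_{ij} = \sum_{k=1}^{q-1} P_{ik}\,(P^{-1})_{kj} = \frac{1}{q-1}\sum_{k=1}^{q-1} \omega^k(i)\,\overline{\omega^k(j)} = \frac{1}{q-1}\sum_{k=1}^{q-1}\omega^k(i/j).
\end{align*}

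Next, since $\omega$ is a generator of $\Fqh$, the powers $\omega^1,\ldots,\omega^{q-1}$ range precisely over all characters of $\F_q^\times$. Applying \eqref{SumCharacters} with $x = i/j \in \F_q^\times$ then shows that the inner sum equals $q-1$ if $i=j$ and $0$ otherwise, so $(PP^{-1})_{ij} = \delta_{ij}$.

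There is no real obstacle here; the argument is a textbook application of character orthogonality, and the only subtlety is bookkeeping the transposed index conventions noted in the footnote. The one alternative I would consider is verifying $P^{-1}P = I$ instead, which uses the dual orthogonality relation \eqref{SumValues} applied to the character $\omega^{j-i}$; either direction suffices to establish the lemma.
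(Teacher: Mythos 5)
Your proof is correct and is essentially identical to the paper's own argument: both compute the $(i,j)$-entry of $PP^{-1}$ as $\frac{1}{q-1}\sum_k \omega^k(i/j)$ and conclude by the orthogonality relation \eqref{SumCharacters}, using that $\omega$ generates $\Fqh$. The remark about the alternative check $P^{-1}P=I$ via \eqref{SumValues} is a fine extra observation but not needed.
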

	
	\begin{proof}
		Note that
		$$
			\sum\limits_{k\in\F_q^\times}\omega^k(i)\cdot\frac{1}{q-1}\overline{\omega^k(j)}= \frac{1}{q-1}\sum\limits_{k\in\F_q^\times}\omega^k\left(\frac{i}{j}\right).
		$$
		Since $\omega$ is a generator of $\Fqh,$ the lemma follows by (\ref{SumCharacters}).
	\end{proof}

	\section{Proofs of Theorems~\ref{Thm_char_poly} and \ref{Thm_k-th_power_Mq}}\label{Proofs-Section}
	We now prove Theorems~\ref{Thm_char_poly} and ~\ref{Thm_k-th_power_Mq}.

	\begin{proof}[Proof of Theorem~\ref{Thm_char_poly}]
	Applying Lemma~\ref{Lem_Mq_conjugate_vectors} with $l = (q-1)/2$ and $l = q-1$ shows that $x-J(\overline{A}\phi,\overline{A})$ and $x-J(\overline{A},A)$ divide $f_q(x).$ Similarly, applying Lemma~\ref{Lem_Mq_conjugate_vectors} with $1\leq l\leq (q-3)/2$ and Lemma~\ref{LemmaV1V2} to the vectors $\boldsymbol{w}^l$ and $\boldsymbol{w}^{q-1-l}$ shows that $x^2-J(\overline{A}B, A\omega^l)J(\overline{A}B,A\overline{\omega^l})$ divides $f_q(x).$
	\end{proof}
	
	\begin{proof}[Proof of Theorem~\ref{Thm_k-th_power_Mq}]
	 We prove this theorem only when $k=2l$ is even. Applying  Lemma~\ref{Lem_Mq_conjugate_vectors} twice, we have that
	$$
	M_q^2 = PDP^{-1},
	$$
	where 
	$$
	D_{mn}=\begin{cases}
		J(\overline{A}, A\omega^m)J(\overline{A}, A\overline{\omega^m}) & \text{\it if \ }m=n \\
		0 & \text{\it otherwise },
	\end{cases}
	$$
	and $P_{ij}=\omega^j(i)$ for $i\in\F_q^\times$ and $1\leq j\leq q-1.$ 
	By Lemma~\ref{PInverse} and a direct computation, we have that
	$$
	(M_q^{2l})_{ij}=\frac{1}{q-1}\sum\limits_{m=1}^{q-1}\omega^m\left(\frac{i}{j}\right)J(\overline{A},A\omega^m)^lJ(\overline{A},A\overline{\omega^m})^l.
	$$
	By applying (\ref{binomial}) and (\ref{binomial_a}) through (\ref{binomial_c}), we have that
	$$
	(M_q^k)_{ij}=A^{l}(-1)\cdot \frac{q^{m}}{q-1}\sum\limits_{m=1}^{q-1}\binom{\overline{\omega^m}}{\overline{A}\overline{\omega^m}}^l\binom{A\overline{\omega^m}}{\overline{\omega^m}}^l \overline{\omega^m}\left(\frac{j}{i}\right).
	$$
	Since $\overline{\omega}$ generates $\Fqh,$ the theorem follows from (\ref{Greene_Def}). The proof is similar when $k$ is odd.
	\end{proof}
	

	\end{document}